\theoremstyle{plain}
\newtheorem{theorem}{Theorem}
\newtheorem{lemma}{Lemma}
\newtheorem{corollary}{Corollary}
\newtheorem{claim}[theorem]{Claim}
\theoremstyle{definition}
\newtheorem{definition}{Definition}
\theoremstyle{remark}
\newtheorem{remark}{Remark}
\numberwithin{equation}{section} 
\begin{document}
\title[A meeting between the $P^{3}_{1}$-set and the cubic-triangular numbers]{An unexpected meeting between the $P^{3}_{1}$-set and the cubic-triangular numbers} 

\author{A. DEBBACHE}
\address{USTHB University, Faculty of Mathematics, LATN Laboratory\\ P.O. Box 32, El Alia 16111, Bab-Ezzouar, Algiers, Algeria,}

\email{a\_debbache2003@yahoo.fr}

\thanks{$^*$ Corresponding author.}

\author{S. BOUROUBI$^*$}

\address{USTHB University, Faculty of Mathematics, L'IFORCE Laboratory\\ P.O. Box 32, El Alia 16111, Bab-Ezzouar, Algiers, Algeria,}
\curraddr{Other Institution\\ Some Other Department\\ P.~O. Box 0002 \\ 12347
  City\\ Other Country}

\email{sbouroubi@usthb.dz\ or\ bouroubis@gmail.com}

\begin{abstract}
A set of $m$ positive integers $\{x_{1},\ldots,x_{m}\}$ is called a $P^{3}_{1}$-set of size $m$ if the product of any three elements in the set increased by one is a cube integer. A $P^{3}_{1}$-set $S$ is said to be extendible if there exists an integer $y\not\in S$ such that $S\cup\{y\}$ still a $P^{3}_{1}$-set. Now, let consider the Diophantine equation $u(u+1)/2=v^{3}$ whose integer solutions produce what we called cubic-triangular numbers. The purpose of this paper is to prove simultaneously that the $P^{3}_{1}$-set $\{1,2,13\}$ is non-extendible and $n=1$ is the unique cubic-triangular number by showing that the two problems meet on the Diophantine equation $2x^{3}-y^{3}=1$ that we solve using $p$-adic analysis.
\end{abstract}


\subjclass[2010]{11D45, 11D09}

\keywords{$P^{3}_{1}$-set, cubic-triangular number, Diophantine equation}

\maketitle

\section{Introduction}
A set of $m$ positif integers $\{x_{1},\ldots,x_{m}\}$ is called a Diophatine $m$-tuple or a $D(1)$-$m$-tuple, if the product of any two elements in the set increased by one is a perfect square, i.e., \mbox{$x_{i}x_{j}+1=u^{2}_{ij}$}, where $u_{ij}\in\mathbb{N}^{*}$, for $1\leq i<j\leq m$. Diophantus of Alexandria was the first to look for such sets. He found a set of four positive rational numbers with the above property $\left\{\frac{1}{16},\frac{33}{16},\frac{17}{4},\frac{105}{16}\right\}$. \mbox{However}, Fermat was the first to give $\left\{1,3,8,120\right\} $ as an example of a Diophantine quadruple. For a detailed history on Diophantine $m$-tuples and its results, we refer the reader to Dujella's webpage \cite{Du}. Throughout the following we consider in a similar way what we have called a $P^{3}_{1}$-set.
\section{Definitions}
\begin{definition}
A $P^{3}_{1}$-set of size $m$ is a set $S=\{x_{1},\ldots,x_{m}\}$ of distinct positive integers, such that \mbox{$x_{i}x_{j}x_{k}+1$} is a cube for $1\leq i<j<k\leq m$.
\end{definition}
\begin{definition}
A $P^{3}_{1}$-set $S$ is said to be extendible if there exists an integer $y\not\in S$ such that $S\cup\{y\}$ is a $P^{3}_{1}$-set.
\end{definition}
\begin{definition}
A triangular number is a figurate number that can be represented in the form of an equilateral triangle of points, where the first row contains a single element and each subsequent row contains one more element than the previous one. Let $T_{n}$ denotes the $n^{th}$ triangular number, then $T_{n}$ is equal to the sum of the $n$ natural numbers from 1 to $n$, whose initial values are listed as the sequence A000217 in \cite{Ol}.
$$T_{n}=\frac{n(n+1)}{2}=\dbinom{n+1}{2},$$
where $\dbinom{n}{k}$ is a binomial coefficient.
\end{definition}
\begin{definition}
A cubic-triangular number is a positive integer that is simultaneously cubic and triangular. Such a number must satisfy $T_{n}=m^{3}$ for some positive integers $n$ and $m$, so
\begin{equation}\label{eq1}
 \frac{n(n+1)}{2}=m^{3}.
\end{equation}
\end{definition}
\section{Some Claims}
\begin{claim}\label{clm1}
The triple $\left\{a-1,a+1,a^{4}+a^{2}+1\right\}$ is an infinite family of $P^{3}_{1}$-set for any positive integers $a\geq2$.
\end{claim}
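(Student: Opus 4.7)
The plan is very short because a $P_1^3$-set of size $3$ has only one admissible triple, namely the whole set, so we just have to exhibit a single cube. Concretely, setting $x_1=a-1$, $x_2=a+1$, $x_3=a^4+a^2+1$, the entire content of the claim is the identity
\begin{equation*}
(a-1)(a+1)(a^4+a^2+1)+1 = \text{a cube}.
\end{equation*}

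First I would group the first two factors via the difference of squares, obtaining $(a-1)(a+1)=a^2-1$. Then I would recognize the remaining factor $a^4+a^2+1$ as exactly the cofactor appearing in the factorization of $a^6-1$ over $a^2-1$, since the cyclotomic-type identity
\begin{equation*}
a^6-1 = (a^2-1)(a^4+a^2+1)
\end{equation*}
is a standard consequence of the geometric sum $1+t+t^2=(t^3-1)/(t-1)$ evaluated at $t=a^2$. Combining these two steps gives
\begin{equation*}
(a-1)(a+1)(a^4+a^2+1) + 1 \;=\; (a^6-1)+1 \;=\; a^6 \;=\; (a^2)^3,
\end{equation*}
which is visibly a perfect cube.

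Finally, I would verify that the three elements are distinct and positive for $a\geq 2$: clearly $1\le a-1 < a+1$, and $a^4+a^2+1 \ge 21 > a+1$ for $a\ge 2$, so we genuinely have a set of size three. No obstacle is expected; the only thing to be careful about is invoking the correct factorization and the lower bound $a\geq 2$ to ensure $a-1\geq 1$, so the smallest element is still a positive integer.
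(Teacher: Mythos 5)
Your proof is correct and is essentially the same as the paper's: both rest on the factorization $a^{6}-1=(a-1)(a+1)(a^{4}+a^{2}+1)$, so that adding $1$ yields the cube $(a^{2})^{3}$. Your additional check that the three elements are distinct and positive for $a\geq 2$ is a small but welcome extra that the paper omits.
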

\begin{proof}
Thanks to the identity $x^{3}=(x-1)(x^{2}+x+1)+1$, it is enough to substitute $x$ by $a^{2}$ to get, $$a^{6}-1=(a-1)(a+1)(a^{4}+a^{2}+1).$$
\end{proof}
\begin{claim}\label{clm2}
The triple $\{a,b,a^{2}b^{2}+3ab+3\}$ form an infinite family of $P^{3}_{1}$-set for any positive integers \mbox{$a$ and $b$}, such that $1\leq a<b$.
\end{claim}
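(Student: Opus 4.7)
The plan is to verify the $P^{3}_{1}$-property directly by computing the product of the three elements, increased by one, and recognizing the result as a perfect cube. Since the set has exactly three elements, there is only one product to check.

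First I would form the product
\begin{equation*}
a \cdot b \cdot (a^{2}b^{2}+3ab+3) + 1 = a^{3}b^{3} + 3a^{2}b^{2} + 3ab + 1.
\end{equation*}
The next step is to recognize the right-hand side as the binomial expansion of $(ab+1)^{3}$, using the identity $(x+1)^{3} = x^{3}+3x^{2}+3x+1$ with $x = ab$. This gives $abc + 1 = (ab+1)^{3}$, which is the required cube.

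It remains only to check that the three elements are distinct positive integers. Positivity of $a^{2}b^{2}+3ab+3$ is immediate since $a,b \geq 1$. For distinctness, the hypothesis $1 \leq a < b$ separates $a$ from $b$, and a trivial size comparison shows $a^{2}b^{2}+3ab+3 > b$ (since $a^{2}b^{2} \geq b^{2} > b$ when $b \geq 2$, and $a < b$ forces $b \geq 2$).

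There is no genuine obstacle here: the claim reduces to spotting the cube $(ab+1)^{3}$, and the rest is bookkeeping. The only reason to state the claim is that it produces, together with Claim~\ref{clm1}, enough infinite families of $P^{3}_{1}$-triples to motivate the non-extendibility question studied in the sequel.
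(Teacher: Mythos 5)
Your proof is correct and uses exactly the identity the paper relies on, namely $ab(a^{2}b^{2}+3ab+3)+1=(ab+1)^{3}$, just expanded in the opposite direction. The additional check of distinctness and positivity is a harmless bit of bookkeeping the paper omits.
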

\begin{proof}
The result follows thanks to the identity : $$(ab+1)^{3}-1=ab(a^{2}b^{2}+3ab+3).$$
\end{proof}
\begin{remark}
The triple $\{ 1,2,13\}$ is a $P^{3}_{1}$-set, it belongs to the family in \mbox{Claim \ref{clm2}}, for $a=1$ and $b=2$.
\end{remark}
\section{Main Results}
\begin{theorem}\label{th1}
Any  $P^{3}_{1}$-set is finite.
\end{theorem}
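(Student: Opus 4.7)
I would argue by contradiction, assuming that $S$ is an infinite $P^{3}_{1}$-set, and extract from it a binary cubic Thue equation whose finiteness of integer solutions would then bound $|S|$. Before doing so, the degenerate situation in which every element of $S$ lies in a single cube-class (i.e.\ all share the same cubefree part) must be dispatched: in that case every element has the form $x=c\,y^{3}$ with $c$ a fixed cubefree positive integer, so the product of any three elements is itself a cube, and requiring $x_{i}x_{j}x_{k}+1$ also to be a cube forces two consecutive positive integers to be cubes, which is impossible. Hence $S$ must contain a pair $b,c$ whose cubefree parts differ, so that $b/c$ is not the cube of a rational.

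Next I would fix such a pair $b,c$ together with any third element $a\in S\setminus\{b,c\}$ and let $d$ range over $S\setminus\{a,b,c\}$. The $P^{3}_{1}$-hypothesis provides positive integers $q,r$ with $abd+1=q^{3}$ and $acd+1=r^{3}$, and eliminating $d$ yields
\[
c\,q^{3}-b\,r^{3}\;=\;c-b\;\neq\;0.
\]
Because $b/c$ is not a rational cube, the binary cubic form $cX^{3}-bY^{3}$ is irreducible over $\mathbb{Q}$, so the displayed equation is a genuine Thue equation and, by Thue's theorem, admits only finitely many solutions $(q,r)\in\mathbb{Z}^{2}$. Since $d=(q^{3}-1)/(ab)$ is determined by $q$, only finitely many $d$ are admissible, contradicting the assumed infinitude of $S$.

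The main obstacle I anticipate is not the Thue-equation step itself --- Thue's theorem is invoked as a black box --- but rather the need to justify irreducibility of the cubic form, which hinges entirely on first excluding the single-cube-class scenario above. Once that preparatory observation is in place, the reduction from an infinite $P^{3}_{1}$-set to a Thue equation is essentially a one-line algebraic manipulation and the conclusion follows at once; the rest of the paper can then concentrate on the sharper, effective question of non-extendibility of $\{1,2,13\}$ via the specific equation $2x^{3}-y^{3}=1$.
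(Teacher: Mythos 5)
Your proof is correct, but it takes a genuinely different route from the paper's. The paper fixes three elements $x_{m-2},x_{m-1},x_{m}$ of $S$, sets $a=x_{m}x_{m-1}$, $b=x_{m}x_{m-2}$, $c=x_{m-1}x_{m-2}$, and multiplies the three cube conditions involving a further element $y$ to get $(ay+1)(by+1)(cy+1)=t^{3}$; since the cubic on the left has three distinct rational roots $-1/a,-1/b,-1/c$ (automatic because the $x_{i}$ are distinct), this is a genus-one superelliptic curve with finitely many integral points, which bounds the number of admissible $y$. You instead use only two of the three cube conditions, eliminate $d$, and land on the Thue equation $cq^{3}-br^{3}=c-b$. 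The trade-offs: the paper's reduction requires no case analysis at all, whereas you must first exclude the scenario in which all elements of $S$ share a cubefree part in order to secure irreducibility of your binary form --- a step you handle correctly via the non-existence of consecutive positive cubes. In return, you invoke only the classical Thue theorem on binary cubic forms rather than finiteness of integral points on a curve of positive genus, which is a lighter and more standard black box; moreover, if one uses the version of Thue's theorem that asks only for three pairwise non-proportional linear factors over $\mathbb{C}$, your cubefree-part preliminary could even be dispensed with, since $cX^{3}-bY^{3}$ always has three distinct roots once $bc\neq 0$. Both arguments are sound.
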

\begin{proof}
Let $S=\{x_{1},x_{2},x_{3},\ldots,x_{m}\}$ be a $P^{3}_{1}$-set. Suppose that $S\cup\{y\}$ still a $P^{3}_{1}$-set, then by setting
\begin{equation*}
\begin{cases}
a=x_{m}x_{m-1}, \cr
b=x_{m}x_{m-2},\cr
c=x_{m-1}x_{m-2},
\end{cases}
\end{equation*}
we get an elliptic curve
$$(ay+1)(by+1)(cy+1)=t^{3},$$
which has only finitely many integral solutions \cite{Sh}.
\end{proof}
In the following, we will restrict our attention to equation (\ref{eq2}), for which we present a proof for it's uniqueness integer solution, by using $p$-adic analysis tools.
\begin{equation}\label{eq2}
 2x^{3}-y^{3}=1.
\end{equation}
We first briefly remind Hensel's Lemma and Strassman's Theorem \cite{Ca}.
\begin{lemma}\label{lem1} \textsc{(Hensel)} Let $P\left( X\right) \in\mathbb{Z}_{p}\left[ X\right] $, a monic polynomial. Suppose that $x\in\mathbb{Z}_{p}$, satisfied:
\begin{enumerate}
  \item $P\left( x\right) \equiv 0\ (mod\ p)$,
  \item $\dfrac{d}{dx}\left(P(x)\right) \not\equiv 0\ (mod\ p)$.
\end{enumerate}
So, there is a unique  $y\in\mathbb{Z}_{p}$ such as $P\left( y\right) =0$ and $y\equiv x\ (mod\ p)$.
\end{lemma}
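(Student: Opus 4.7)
The plan is to use $p$-adic Newton iteration for existence and a Taylor expansion factorization for uniqueness. To construct $y$, I would set $x_{0}=x$ and recursively define
$$x_{n+1}=x_{n}-\dfrac{P(x_{n})}{P'(x_{n})},$$
where the quotient is legitimate because, as will be maintained throughout the induction, $x_{n}\equiv x\ (mod\ p)$, so by hypothesis (2) the value $P'(x_{n})$ is a unit in $\mathbb{Z}_{p}$.

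The core step is a joint induction on $n\geq 0$ establishing
\begin{enumerate}
\item[(i)] $x_{n+1}\equiv x_{n}\ (mod\ p^{n+1})$,
\item[(ii)] $P(x_{n+1})\equiv 0\ (mod\ p^{n+2})$.
\end{enumerate}
Both are obtained from the Taylor expansion
$$P(x_{n+1})=P(x_{n})+P'(x_{n})(x_{n+1}-x_{n})+(x_{n+1}-x_{n})^{2}R_{n},\qquad R_{n}\in\mathbb{Z}_{p},$$
in which the choice of $x_{n+1}$ kills the first two terms and leaves a remainder of $p$-adic valuation at least $2(n+1)$. From (i) the sequence $(x_{n})$ is $p$-adically Cauchy, so by completeness of $\mathbb{Z}_{p}$ it converges to some $y\in\mathbb{Z}_{p}$ with $y\equiv x\ (mod\ p)$; continuity of $P$ combined with (ii) then forces $P(y)=0$.

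For uniqueness, I would assume a second root $y'\in\mathbb{Z}_{p}$ with $y'\equiv x\ (mod\ p)$ and exploit the polynomial factorization
$$0=P(y')-P(y)=(y'-y)\,S(y,y'),$$
where $S\in\mathbb{Z}_{p}[X,Y]$ arises from the identity $P(Y')-P(Y)=(Y'-Y)\,S(Y,Y')$ and satisfies $S(Y,Y)=P'(Y)$. Since $y'\equiv y\equiv x\ (mod\ p)$, one gets $S(y,y')\equiv P'(x)\ (mod\ p)$, which is a unit in $\mathbb{Z}_{p}$ by hypothesis (2); hence $y'=y$.

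The main technical obstacle is the valuation bookkeeping along the iteration: one must simultaneously verify that each new iterate stays in the same residue class modulo $p$ (so $P'$ remains invertible at it) and that $v_{p}(P(x_{n}))$ grows without bound. Once this is set up cleanly through the Taylor identity above, existence and uniqueness both follow without further effort.
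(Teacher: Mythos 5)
The paper does not prove this lemma at all: it is recalled verbatim from the reference \cite{Ca} as a classical tool, so there is no argument of the authors' to compare yours against. Your proof is the standard Newton--Hensel iteration and is correct: the joint induction (i)--(ii) goes through because the Taylor identity $P(x_{n+1})=P(x_n)+P'(x_n)(x_{n+1}-x_n)+(x_{n+1}-x_n)^2R_n$ with $R_n\in\mathbb{Z}_p$ is valid for any polynomial over $\mathbb{Z}_p$ (the divided derivatives $P^{(k)}/k!$ have $\mathbb{Z}_p$-coefficients), the congruence $x_n\equiv x\ (mod\ p)$ keeps $P'(x_n)$ a unit so the recursion is well defined, and the valuation of $P(x_n)$ grows at least linearly, giving a Cauchy sequence whose limit is the desired root. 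The uniqueness argument via the factorization $P(Y')-P(Y)=(Y'-Y)S(Y,Y')$ with $S(y,y')\equiv P'(x)\ (mod\ p)$ a unit is likewise complete. Note in passing that the monicity hypothesis in the statement is never needed in your argument, which is consistent with the usual formulations of the lemma.
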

\begin{theorem} \textsc{(Strassman)} Let  $\mathbb{K}$ be a complete field for the non-archimedean norm $\| . \|$, $\mathcal{A}$ its ring of integers, and let
$$g\left(x\right) =\overset{+\infty }{\underset{n=0}{\sum }}g_{n}x^{n}.$$
Suppose that $g_{n}\rightarrow 0$ (so $g\left(x\right)$ converges in $\mathcal{A}$), but with $g_{n}$ not all zeros, there is at most a finite number of elements $b$ of $\mathcal{A}$ such that $g\left( b\right) =0$. More precisely, there is at most $M$ elements $b$ of $\mathcal{A}$, such that
$$\| g_{N} \| =\underset{n}{max}\| g_{n}\|,\ \ \| g_{n} \|<\| g_{N}\|, \forall n>M.$$
\end{theorem}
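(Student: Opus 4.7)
The plan is to prove the sharp form of the theorem, namely that $g$ has at most $N$ zeros in $\mathcal{A}$ where $N$ is the \emph{largest} index at which $\max_n\|g_n\|$ is attained (any larger $M$ as in the statement then gives an equally valid but weaker bound). Since $\mathbb{K}$ is complete and non-archimedean, I would begin by normalizing: dividing $g$ by a coefficient of maximal norm, one may assume $\max_n\|g_n\|=1$, so every $g_n\in\mathcal{A}$ with $\|g_N\|=1$ and $\|g_n\|<1$ for $n>N$. The proof then proceeds by induction on $N$.

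For the base case $N=0$, one has $\|g_0\|=1$ and $\|g_n\|<1$ for $n\geq 1$. For any $b\in\mathcal{A}$, the ultrametric inequality yields
$$\bigl\|g(b)-g_0\bigr\|=\Bigl\|\sum_{n\geq 1}g_n b^n\Bigr\|\leq \max_{n\geq 1}\|g_n\|\,\|b\|^n<1=\|g_0\|,$$
so $\|g(b)\|=\|g_0\|\neq 0$. Hence $g$ has no zero in $\mathcal{A}$, matching the bound.

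For the inductive step, assume the result for $N-1$. If $g$ has no zero in $\mathcal{A}$ we are done; otherwise pick a zero $b_0$ and form the factorization
$$g(x)=g(x)-g(b_0)=(x-b_0)\,h(x),\quad h(x)=\sum_{n\geq 0}h_n x^n,\ h_n=\sum_{m>n}g_m\,b_0^{\,m-1-n},$$
obtained by expanding $x^m-b_0^m=(x-b_0)\sum_{k=0}^{m-1}x^k b_0^{m-1-k}$ and swapping the order of summation. Convergence of each $h_n$ and the decay $h_n\to 0$ follow from $g_m\to 0$ together with $\|b_0\|\leq 1$. The crux is to verify that $\|h_{N-1}\|=1$ while $\|h_n\|<1$ for $n\geq N$: for $n\geq N$ every summand $g_m b_0^{m-1-n}$ has $m>N$ and hence norm strictly less than $1$; for $n=N-1$ the summand at $m=N$ is $g_N$ of norm $1$ while all others have norm $<1$, so the ultrametric equality case forces $\|h_{N-1}\|=1$. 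Applying the induction hypothesis to $h$ gives at most $N-1$ zeros for $h$, hence at most $N$ zeros for $g$ (namely $b_0$ plus the zeros of $h$).

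The main obstacle is precisely this last verification of the coefficient norms of $h$: one must invoke the strict non-archimedean dominance $\|g_N\|>\|g_m\|$ for $m>N$ to rule out accidental cancellation in the sum defining $h_{N-1}$, and check carefully that after the factorization the largest index attaining the (unchanged) maximum norm drops exactly to $N-1$ rather than further. Once this coefficient bookkeeping is in hand, the induction closes without further difficulty.
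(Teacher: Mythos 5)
Your proposal is correct, and it is the standard proof of Strassman's theorem: normalize so that $\max_n\|g_n\|=1$ is attained last at index $N$, induct on $N$, and after factoring out a root $b_0$ verify via the ultrametric equality case that the quotient series $h$ has its last maximal coefficient at index $N-1$. Note, however, that the paper itself offers no proof of this statement --- it is quoted as a known result from the reference \cite{Ca} (Cassels), where essentially your argument appears --- so there is nothing in the paper to compare against; your write-up in fact repairs the paper's garbled formulation (which conflates the indices $M$ and $N$) by proving the sharp bound of at most $N$ zeros, $N$ being the largest index with $\|g_N\|=\max_n\|g_n\|$. The only points worth spelling out in a full write-up are (i) that the maximum of $\|g_n\|$ is attained because $g_n\to 0$ and the $g_n$ are not all zero, and (ii) that the interchange of summation defining $h_n=\sum_{m>n}g_m b_0^{\,m-1-n}$ is legitimate because the terms tend to $0$ in a complete non-archimedean field; both are routine and your sketch already gestures at them.
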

\begin{lemma}\label{lem2}
Let $b\in\mathbb{Q}_{p}, \left\vert b\right\vert_{2}\leq 2^{-2}$ and $\left\vert b\right\vert_{p}\leq p^{-1}$ ($p\neq 2$). So, there is a series \mbox{$\Phi_{b}\left(X\right) =\underset{n\geq 0}{\sum }\gamma _{n}X^{n}$}, with $\gamma _{n}\in\mathbb{Q}_{p}$, $\gamma_{n}\rightarrow 0$, such that $\Phi _{b}\left( r\right) =\left(1+b\right)^{r}$, $\forall r\in\mathbb{Z}$.
\end{lemma}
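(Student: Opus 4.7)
The approach is to define $c := \log(1+b) = \sum_{n \geq 1} (-1)^{n-1} b^n/n$ using the $p$-adic logarithm series, and then set
\[
\Phi_b(X) := \exp(cX) = \sum_{n \geq 0} \frac{c^n}{n!}\, X^n,
\]
so that $\gamma_n = c^n/n!$. What then needs to be checked is that (i) the series for $c$ converges in $\mathbb{Q}_p$ with the expected valuation, (ii) the coefficients $\gamma_n$ tend to $0$, and (iii) the identity $\Phi_b(r) = (1+b)^r$ holds for every $r \in \mathbb{Z}$.

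For (i), I would compute $v_p(b^n/n) = n\, v_p(b) - v_p(n)$ and use the crude bound $v_p(n) \leq \log_p n$ to verify that the terms for $n \geq 2$ strictly exceed the $n = 1$ term in $p$-adic valuation, under either hypothesis. This gives both convergence and the equality $v_p(c) = v_p(b)$, so $|c|_2 \leq 2^{-2}$ when $p = 2$ and $|c|_p \leq p^{-1}$ when $p$ is odd. For (ii), Legendre's formula yields $v_p(n!) < n/(p-1)$, hence
\[
v_p(\gamma_n) > n \left(v_p(c) - \frac{1}{p-1}\right),
\]
and the parenthesis is $\geq 1$ when $p = 2$ and $\geq (p-2)/(p-1) > 0$ when $p$ is odd; in both cases $v_p(\gamma_n)$ grows linearly in $n$, so $\gamma_n \to 0$.

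For (iii), rearranging gives $\Phi_b(r) = \exp(rc)$. The bound $|c|_p < p^{-1/(p-1)}$ (namely $2^{-2} < 2^{-1}$ when $p = 2$ and $p^{-1} < p^{-1/(p-1)}$ when $p$ is odd), combined with $|r|_p \leq 1$, keeps $rc$ inside the disk of convergence of $\exp$ for every $r \in \mathbb{Z}$. The functional equation $\exp(x+y) = \exp(x) \exp(y)$ then yields $\exp(rc) = \exp(c)^r$ by induction on $r$ (extended to negative $r$ via $\exp(-c) = \exp(c)^{-1}$), while $\exp(c) = \exp(\log(1+b)) = 1+b$ because $\exp$ and $\log$ are mutually inverse on the relevant disk. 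The main obstacle is really only the valuation bookkeeping in (i) and (ii): pinning down $v_p(c) = v_p(b)$ and then the strict inequality $v_p(c) > 1/(p-1)$ that forces the coefficients to decay. These need to be checked case by case since the thresholds $p^{-1/(p-1)}$ differ for $p = 2$ and $p$ odd, but everything else is a formal consequence of the standard properties of the $p$-adic exponential and logarithm.
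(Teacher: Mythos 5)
Your proof is correct, but note that the paper itself gives no proof of this lemma at all: it is stated as a recalled auxiliary result imported from the reference \cite{Ca} (alongside Hensel's Lemma and Strassman's Theorem), so there is nothing in the paper to compare against line by line. Your route via $c=\log(1+b)$ and $\Phi_b(X)=\exp(cX)$ is one of the two standard proofs, and all the delicate points are handled properly: the isometry property $v_p(c)=v_p(b)$ on the disk $v_p>1/(p-1)$ (which is exactly why the hypothesis is $2^{-2}$ at $p=2$ but only $p^{-1}$ at odd $p$), the estimate $v_p(\gamma_n)>n\bigl(v_p(c)-\tfrac{1}{p-1}\bigr)\to\infty$ via Legendre's formula, and the verification that $rc$ stays in the convergence disk of $\exp$ so that $\exp(rc)=\exp(c)^r=(1+b)^r$. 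The other standard proof --- the one given in Cassels's book, which this paper is implicitly leaning on --- avoids $\exp$ and $\log$ altogether: one writes $(1+b)^r=\sum_{n\ge0}\binom{r}{n}b^n$, observes that $\binom{r}{n}b^n=\frac{b^n}{n!}\,r(r-1)\cdots(r-n+1)$ is a polynomial in $r$ of degree $n$ whose coefficients have valuation at least $n\,v_p(b)-v_p(n!)$, and rearranges the double sum into a single power series in $r$; the same inequality $v_p(b)>1/(p-1)$ justifies the rearrangement. The two arguments are essentially equivalent in content (both reduce to the same valuation bookkeeping), but the binomial version has the mild advantage of producing $\gamma_n\in\mathbb{Z}_p$-linear combinations of the $\binom{r}{n}b^n$ directly and of not requiring the multiplicativity of $\exp$, whereas yours is shorter to state once the standard properties of the $p$-adic exponential and logarithm are granted. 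Either way, your proposal is a complete and valid proof of the lemma as stated.
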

\noindent By application of these results, we may show
\begin{theorem}\label{th2}
The unique positive integer solution of Equation (\ref{eq2}) is $(1,1)$.
\end{theorem}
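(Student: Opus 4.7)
The plan is to reformulate equation (\ref{eq2}) as a unit equation in the cubic field $K = \mathbb{Q}(\theta)$ with $\theta = \sqrt[3]{2}$, and then bound the admissible exponents via Strassman's theorem, applied through the binomial series of Lemma \ref{lem2}. Starting from
\[
 2x^{3} - y^{3} \;=\; (\theta x - y)(\theta^{2}x^{2} + \theta x y + y^{2}),
\]
equation (\ref{eq2}) forces each factor to be a unit of $\mathbb{Z}[\theta]$. Since $\mathbb{Z}[\theta]$ has class number one with fundamental unit $\eta = \theta - 1$ of norm $+1$, while $N_{K/\mathbb{Q}}(\theta x - y) = 2x^{3} - y^{3} = 1$, one must have $\theta x - y = (\theta - 1)^{n}$ for some $n \in \mathbb{Z}$. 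Expanding $(\theta - 1)^{n} = A_{n} + B_{n}\theta + C_{n}\theta^{2}$ with $A_{n}, B_{n}, C_{n}\in\mathbb{Z}$, the identity reads $-y + x\theta = A_{n} + B_{n}\theta + C_{n}\theta^{2}$, which forces $C_{n} = 0$, $B_{n} = x$, $A_{n} = -y$. The theorem therefore reduces to classifying the $n \in \mathbb{Z}$ for which $C_{n} = 0$.

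To turn this condition into a $p$-adic problem I would take $p = 31$, the smallest prime that splits completely in $K$ (its three cube roots of $2$ reduce to $4$, $7$, $20$ modulo $31$). Under the three embeddings $\sigma_{i}\colon\theta\mapsto\theta_{i}\in\mathbb{Z}_{31}$, the relation $\theta x - y = (\theta - 1)^{n}$ becomes the system $\theta_{i}x - y = \epsilon_{i}^{n}$ with $\epsilon_{i} = \theta_{i} - 1$; eliminating $x$ and $y$ yields the single constraint
\[
 F(n) \;:=\; (\theta_{2} - \theta_{3})\,\epsilon_{1}^{n} + (\theta_{3} - \theta_{1})\,\epsilon_{2}^{n} + (\theta_{1} - \theta_{2})\,\epsilon_{3}^{n} \;=\; 0,
\]
which holds tautologically at $n = 0$ and $n = 1$. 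The goal becomes to show that $F$ admits no further integer zero.

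For this, let $m$ be a common multiple of the multiplicative orders of the residues $\bar\epsilon_{i}$ in $\mathbb{F}_{31}^{*}$ (the least choice being $m = 30$). Writing $n = mq + s$ for $s \in \{0, 1, \ldots, m-1\}$, one has $\epsilon_{i}^{n} = \epsilon_{i}^{s}(1 + 31\alpha_{i})^{q}$ with $\alpha_{i}\in\mathbb{Z}_{31}$; Lemma \ref{lem2} expands each factor $(1 + 31\alpha_{i})^{q}$ as a convergent $31$-adic power series in $q$. Consequently $F(n) = 0$ becomes, residue class by residue class, a power series equation
\[
 F_{s}(q) \;=\; \sum_{k \geq 0} \gamma_{s, k}\, q^{k} \;=\; 0,\qquad \gamma_{s, k}\in\mathbb{Q}_{31},\ \ |\gamma_{s, k}|_{31}\to 0,
\]
to which Strassman's theorem applies. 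It remains to verify, in each of the $30$ residues $s$, that the Strassman bound is at most $1$, that $q = 0$ is a zero only when $s \in \{0, 1\}$, and that no further integer $q$ is a zero. Once this is done, $n = 0$ gives $(x, y) = (0, -1)$ (not positive) while $n = 1$ yields $(x, y) = (1, 1)$, the claimed unique positive solution.

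The main technical obstacle is the explicit $31$-adic analysis in the last step: one must track the valuations of the coefficients $\gamma_{s, k}$, which combine the falling factorials $q(q-1)\cdots(q - k + 1)$, the powers $(31\alpha_{i})^{k}$, and the denominators $k!$ arising from the expansion of $\binom{q}{k}$, with enough precision to verify Strassman's hypothesis across all $30$ residue classes.
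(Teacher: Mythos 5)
Your proposal follows essentially the same route as the paper's proof: Skolem's method in $\mathbb{Q}(\sqrt[3]{2})$ built on the unit $\sqrt[3]{2}-1$, reduction at the completely split prime $31$, splitting the exponent $n$ into residue classes modulo $30$, and Strassman's theorem applied through the binomial series of Lemma \ref{lem2} --- indeed your linear form $F(n)$ coincides, up to the Vandermonde factor, with the paper's sequence $c_{n}$. The one substantive point you assert rather than argue is that $\theta-1$ is the fundamental unit with torsion $\pm1$ (the paper derives this from a lower bound on the fundamental unit in terms of the discriminant), and the $31$-adic verifications you defer at the end are precisely the ones the paper also leaves as ``calculations show''.
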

\begin{proof}
Let $(a,b)$ be a solution of Equation (\ref{eq2}). As $N(2^{\frac{1}{3}}a-b)=2a^{3}-b^{3}$, so $2^{\frac{1}{3}}a-b$ is an algebraic unit, especially $\theta=\theta _{1}=2^{\frac{1}{3}}-1$. For convenience, on all the rest of the paper we will work  with the field $\mathbb{K}=\mathbb{Q}(\theta)$. Note $\mathcal{A}$, the integer ring of $\mathbb{K}$ and $\mathcal{U}$ its group of units. We have
$$2=(\theta +1)^{3}.$$
Hence
$$\theta^{3}+3\theta^{2}+3\theta-1=0.$$
Let $f(X)=X^{3}+3X^{2}+3X-1$ be the irreducible polynomial of $\theta$. According to Dirichlet's unit theorem $\mathcal{U}=G\times\mathbb{Z}^{r+s-1}$, where $G$ is the root group of the unit of $\mathcal{A}$, $r$ is the number of real zeros of $f$, and $2s$ is the number of complex zeros of $f$. Here, $r=s=1$, so $\mathcal{U}=G\times\mathbb{Z}$. Let $\alpha$ be a root of the unit of $\mathcal{A}$, then the dimension of the field $\mathbb{Q}(\alpha)$ divides the dimension of the field $\mathbb{K}$, so it is a divisor of $3$. Since $\mathbb{Q}(\alpha) \neq \mathbb{K}$, we have $\mathbb{Q}(\alpha)=\mathbb{Q}$ and $A=\mathbb{Z}$, where $A$ is the ring of the units of $\mathbb{Q}(\alpha)$. However, the only invertible elements of $\mathbb{Z}$ are $+1$ and $-1$, hence $\mathcal{U}=\{\pm u^{n}/n\in \mathbb{Z}\}$. Let $u>1$ be the fundamental unit, $\rho e^{i\theta}$ and $\rho e^{-i\theta}$ its conjugates. We have
$$N(u)=u\times \rho e^{i\theta}\times \rho e^{-i\theta}=1.$$
It follows
$$u=\rho^{-2}.$$
In addition
$$disc_{\mathbb{Z}}(u)=(u-\rho e^{i\theta })^{2}(u-\rho e^{-i\theta })^{2}(\rho e^{i\theta}-\rho e^{-i\theta})^{2}=-4(\rho^{3}+\rho^{-3}-2cos\theta)^{2}sin^{2}\theta.$$
For $c=cos\theta$, let us set $g(x)=(1-c^{2})(x-2c)^{2}-x^{2}$.  Then we get
$$g(x)\leq 4(1-c^{2}),$$
or even
$$(1-c^{2})(x-2c)^{2}\leq x^{2}+4(1-c^{2}).$$
Replacing $x$ by $\rho^{3}+\rho^{-3}$, we obtain
$$(1-c^{2})(\rho^{3}+\rho^{-3}-2c)^{2}<u^{3}+u^{-3}+6.$$
This involves that
\begin{center}
$\left\vert disc_{\mathbb{Z}}(u)\right\vert <4(u^{3}+u^{-3}+6)$.
\end{center}
Therefore
\begin{center}
$u^{3}>\dfrac{d}{4}-6-u^{-3}>\dfrac{d}{4}-7,$
\end{center}
where $d=\left\vert disc_{\mathbb{Z}}(u)\right\vert $.\\
The discriminant of $f$ equals -108, then $u^{3}>20$. Hence
$$u>2,7144.$$%
Since $\theta^{-1}\simeq 3,8473$, and $u^{2}>7,3680$, we get $u=\theta^{-1}$. We therefore have
$$\mathcal{U}=\{\pm \theta^{n}/n\in\mathbb{Z}\}.$$
Moreover, $2^{\frac{1}{3}}a-b=(a-b)+(2^{\frac{1}{3}}-1)a=(a-b)+a\theta$. Since $N(2^{\frac{1}{3}}a-b)=1$, we have $(a-b)+a\theta\in\mathcal{U}$, i.e., there exists $n\in\mathbb{Z}$ such that
\begin{equation}\label{eq3}
(a-b)+a\theta=\pm \theta^{n}.
\end{equation}
If we take for instance $(a-b)+a\theta=\theta^{n}$, then we get
$$(a-b)+a\theta_{i}=\theta_{i}^{n},\ \textrm{pour}\ i=1,2,3,$$
where $\theta_{1}=\theta$, $\theta_{2}$ and $\theta_{3}=\overline{\theta_{2}}$ are the three zeros of $f$. We have obviously
\begin{equation}\label{eq4}
\frac{1}{f^{^{\prime}}(\theta_{1})}+\frac{1}{f^{^{\prime}}(\theta_{2})}+\frac{1}{f^{^{\prime}}(\theta_{3})}=\frac{1}{(\theta_{1}-\theta_{2})
(\theta_{1}-\theta_{3})}+\frac{1}{(\theta_{2}-\theta_{1})(\theta_{2}-\theta_{3})}+\frac{1}{(\theta_{3}-\theta_{1})(\theta_{3}-\theta_{2})}=0,
\end{equation}
and
\begin{equation}\label{eq5}
\frac{\theta_{1}}{f^{^{\prime }}(\theta_{1})}+\frac{\theta_{2}}{f^{^{\prime }}(\theta_{2})}+\frac{\theta_{3}}{f^{^{\prime }}(\theta_{3})}=\frac{\theta_{1}}{(\theta_{1}-\theta_{2})(\theta_{1}-\theta_{3})}+\frac{\theta_{2}}{(\theta_{2}-\theta_{1})
(\theta_{2}-\theta_{3})}+\frac{\theta_{3}}{(\theta_{3}-\theta_{1})(\theta_{3}-\theta_{2})}=0.
\end{equation}
If we multiply (\ref{eq4}) by $a-b$ and (\ref{eq5}) by $a$, we find
$$\dfrac{(a-b)+a\theta_{1}}{f^{^{\prime}}(\theta_{1})}+\dfrac{(a-b)+a\theta_{2}}{f^{^{\prime}}(\theta_{2})}+\dfrac{(a-b)+a\theta_{3}}{f^{^{\prime}}
(\theta_{3})}=\dfrac{\theta_{1}^{n}}{f^{^{\prime}}(\theta_{1})}+\dfrac{\theta_{2}^{n}}{f^{^{\prime}}(\theta_{2})}+\dfrac{\theta_{3}^{n}}{f^{^{\prime}}
(\theta_{3})}=0.$$
So, solving the equation $2x^{3}-y^{3}=1$, is like finding the zeros of the sequence $(c_{n})_{n\in\mathbb{Z}}$ defined by:
$$c_{n}=\dfrac{\theta_{1}^{n}}{(\theta_{1}-\theta_{2})(\theta_{1}-\theta_{3})}+\dfrac{\theta_{2}^{n}}{(\theta_{2}-\theta_{1})
(\theta_{2}-\theta_{3})}+\dfrac{\theta_{3}^{n}}{(\theta_{3}-\theta_{1})(\theta_{3}-\theta_{2})}\cdot\vspace{0.2cm}$$
Now, let us work locally in $\mathbb{Q}_{p}$. For this purpose, we are looking for an adequate prime number $p$ that allows us to apply Hensel's Lemma in ordre to find two zeros of $f$ $\alpha$ and $\beta \in\mathbb{Q}_{p}$, the third one is then given by $\alpha +\beta +\gamma=-3$. Since $f(3)=2\times 31\equiv 0\ (mod\ 31)$, $f(6)=11\times 31\equiv 0\ (mod\ 31)$, $f'(3)=48\not\equiv 0\ (mod\ 31)$ and \mbox{$f'(6)=147\not\equiv 0\ (mod\ 31)$}, then according to Hensel's Lemma, there exist a unique $\alpha$ and $\beta$ in $\mathbb{Z}_{31}$, where $\alpha = 34\ \textrm{and}\ \beta = 37$, hence $\gamma =-74$.
According to Fermat's little theorem, we have $\alpha^{30}\equiv 1\ (mod\ 31)$. Thus $\alpha^{30}=1+a$. Since $$\alpha^{30}=34^{30}\equiv 838\ (mod\ 31^{2}).$$
Then $$a\equiv 837\ (mod\ 31^{2}).$$
Similarly,
$$\beta^{30}\equiv 1\ (mod\ 31).$$
Thus
$$\beta^{30}=1+b.$$
Since
$$\beta^{30}=37^{30}\equiv 869\ (mod\ 31^{2}).$$ Then
$$b\equiv868\ (mod\ 31^{2}).$$
Likewise, $\gamma^{30}\equiv 1\ (mod\ 31)$. Thus
$$\gamma^{30}=1+c.$$
Since $$\gamma^{30}=74^{30}\equiv 94\ (mod\ 31^{2}).$$
Then $$c\equiv93\ (mod\ 31^{2}).$$
In the rest of the proof we will need the following table:
\begin{center}
\begin{tabular}{|c|c|c|c|}
\hline
$r$ & $\alpha^{r}\ (mod\ 31^{2})$ & $\beta^{r}\ (mod\ 31^{2})$ & $\gamma^{r}\ (mod\ 31^{2})$ \\ \hline
$1$ & $34$ & $37$ & $-74$ \\ \hline
$30$ & $838$ & $869$ & $94$ \\ \hline
\end{tabular}
\end{center}
In addition, we have
$$\begin{array}{lll}
c_{r+30s} & = & \dfrac{\alpha^{r}}{(\alpha-\beta)(\alpha-\beta)}(\alpha^{30})^{s}+\dfrac{\beta^{r}}{(\beta-\alpha)(\beta-\gamma)}(\beta^{30})^{s}+\dfrac{\gamma^{r}}{(\gamma-\beta)
 (\gamma-\alpha)}(\gamma^{30})^{s} \vspace{0.2cm}\\
          & = & \dfrac{\alpha^{r}}{(\alpha-\beta)(\alpha-\beta)}(1+a)^{s}+\dfrac{\beta^{r}}{(\beta-\alpha)(\beta-\gamma)}(1+b)^{s}+\dfrac{\gamma^{r}}{(\gamma-\beta)(\gamma-\alpha)}
          (1+c)^{s}\vspace{0.2cm}\\
          &\equiv& c_{r}\ (mod\ 31),\ \textrm{for}\ 1\leq r\leq 30.
\end{array}$$
The calculations show that $c_{r}\neq 0$ for $r\neq 1,30$. Since, $c_{r+30s}\equiv c_{r}\ (mod\ 31)$, we get
$$c_{r+30s}\neq 0, \forall s\in\mathbb{N},\ \textrm{for}\ r\neq 1, 30.$$
Let's say for $r= 1, 30$ and $s\in \mathbb{Q}_{31}$,
$$u_{r}(s)=\dfrac{\alpha^{r}}{(\alpha-\beta)(\alpha-\beta)}(1+a)^{s}+\dfrac{\beta^{r}}{(\beta-\alpha)(\beta-\gamma)}(1+b)^{s}+\dfrac{\gamma^{r}}{(\gamma-\beta)
 (\gamma-\alpha)}(1+c)^{s}.$$
To demonstrate the result, it is enough to work only with $u_{1}$ and $u_{30}$. Since $\left\vert a\right\vert_{31}\leq 31^{-1}, \left\vert b\right\vert_{31}\leq 31^{-1}$ and $\left\vert c\right\vert_{31}\leq 31^{-1}$, we deduce from Lemma \ref{lem2} that $u_{r}$ is a function that we can develop as a series:
$$\lambda _{0,r}+\lambda _{1,r}s+\lambda _{2,r}s^{2}+\cdots$$
We have
$$\ \lambda _{0,r}=0,\ \textrm{for}\ r=1,30,\ \ \lambda _{j,r}\not\equiv 0\ \left(mod\ 31^{2}\right),\ \textrm{for}\ j\geq 2,\ \ r\ \textrm{unspecified},$$
and
$$\lambda_{1,r}=\dfrac{\alpha^{r}}{(\alpha-\beta)(\alpha -\gamma)}a+\dfrac{\beta^{r}}{(\beta-\alpha)(\beta-\gamma)}b+\dfrac{\gamma^{r}}{(\gamma-\beta)(\gamma-\alpha)}c\not\equiv 0\ (mod\ 31^{2}),$$
for $r=1,30.$\\

According to Strassman's theorem, the functions $u_{r}(s),\  r=1,30$, have at most one root. As they have at least one root, they have therefore exactly one root. From Equation (\ref{eq3}), we have $\theta^{0}=1$ implies $a=0$ and $b=-1$, which is impossible, and $\theta^{1}=\theta$ implies $a=b=1$.\\

This completes the proof.
\end{proof}
\begin{corollary}\label{cor1}
The $P^{3}_{1}$-set $\{1,2,13\}$ is nonextendible.
\end{corollary}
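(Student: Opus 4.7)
The plan is to reduce the extendibility question directly to Theorem \ref{th2}. Assume for contradiction that there exists a positive integer $y \notin \{1,2,13\}$ such that $\{1,2,13,y\}$ is a $P^{3}_{1}$-set. Beyond the already-verified triple product $1\cdot 2\cdot 13 + 1 = 27$, there are three new cube conditions to impose:
\begin{equation*}
2y+1 = a^{3},\qquad 13y+1 = b^{3},\qquad 26y+1 = c^{3},
\end{equation*}
for some positive integers $a,b,c$.

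Next, I would look for an algebraic relation among these three equations that removes $y$. The cleanest one comes from the second and third: since $26y+1 = 2(13y+1)-1$, one obtains
\begin{equation*}
c^{3} = 2b^{3} - 1, \qquad \text{i.e.,}\qquad 2b^{3}-c^{3}=1.
\end{equation*}
This is precisely Equation (\ref{eq2}) in the variables $(x,y)=(b,c)$.

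At this point, Theorem \ref{th2} does all the work: its unique positive integer solution is $(b,c)=(1,1)$. Substituting back into $13y+1=b^{3}=1$ forces $y=0$, which is not a positive integer and so cannot belong to a $P^{3}_{1}$-set by Definition 1. This contradiction finishes the argument, and the third cube condition $2y+1=a^{3}$ is not even needed.

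There is essentially no hard step left here, since the main difficulty, namely solving the Thue-type equation $2x^{3}-y^{3}=1$ via Hensel's lemma and Strassman's theorem, is already absorbed into Theorem \ref{th2}. The only point requiring a sentence of care is noting that a legitimate extension $y$ must be a positive integer (by Definition 1), so the degenerate value $y=0$ arising from $(b,c)=(1,1)$ is genuinely ruled out rather than producing a valid extension.
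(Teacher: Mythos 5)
Your proposal is correct and follows essentially the same route as the paper: both set up the three cube conditions $2y+1$, $13y+1$, $26y+1$, combine the last two into $2b^{3}-c^{3}=1$, and invoke Theorem \ref{th2} to force $(b,c)=(1,1)$, hence $y=0$, a contradiction. The only (welcome) difference is that you spell out explicitly why $(1,1)$ is impossible in the system, where the paper just asserts it.
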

\begin{proof}
Suppose there exists an integer $d>13$ such that the quadruple $\{1,2,13,d\}$ is a $P^{3}_{1}$-set. Then the following system of equations has an integral solution $(u,v,w)\in\mathbb{N}^{3}$:
\begin{equation*}
(S)
\begin{cases}
2d+1=u^{3}, \cr
13d+1=v^{3},\cr
26d+1=w^{3}.
\end{cases}
\end{equation*}
\noindent The system $(S)$ yields
\begin{equation}\label{eq6}
 2v^{3}-w^{3}=1.
\end{equation}
From Theorem \ref{th2}, the unique positive integer solution of Equation (\ref{eq6}) is $(v,w) = (1, 1)$, which is impossible in $(S)$.\vspace{0.2cm}\\
This completes the proof.
\end{proof}
\begin{corollary}\label{cor2}
The unicity of positive integer solution of Equation (\ref{eq2}) implies the unicity of a cubic-triangular number.
\end{corollary}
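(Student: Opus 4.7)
The plan is to deduce the uniqueness of the cubic-triangular number from Theorem~\ref{th2} by a parity-and-coprimality decomposition of the defining equation $n(n+1)/2 = m^{3}$. Rewriting it as $n(n+1) = 2m^{3}$ and using $\gcd(n, n+1) = 1$, I split into two sub-cases by the parity of $n$ and reduce each to a norm-one Diophantine equation in the field $\mathbb{K} = \mathbb{Q}(2^{1/3})$ already studied in Theorem~\ref{th2}.

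If $n$ is odd, I set $n + 1 = 2k$; the factorization $k(2k - 1) = m^{3}$ has coprime factors, so each must itself be a cube, say $k = s^{3}$ and $2k - 1 = t^{3}$ with $st = m$. Eliminating $k$ yields $2s^{3} - t^{3} = 1$, which is exactly Equation (\ref{eq2}). Theorem~\ref{th2} then forces $(s, t) = (1, 1)$, whence $k = n = m = 1$ and we recover $T_{1} = 1 = 1^{3}$.

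If $n$ is even, I set $n = 2k$; the factorization $k(2k + 1) = m^{3}$ similarly yields $k = s^{3}$, $2k + 1 = t^{3}$, hence the companion equation $2s^{3} - t^{3} = -1$. To rule this out, I would rerun the proof of Theorem~\ref{th2}: the element $2^{1/3} s - t$ is still a unit in $\mathcal{A}$ (now of norm $-1$ rather than $+1$), so $(s - t) + s\theta = \pm \theta^{N}$ for some $N \in \mathbb{Z}$, the very same integer sequence $c_{N}$ appears after multiplying by $1/f'(\theta_{i})$ and summing over the conjugates, and Strassman's Theorem again pins down $N \in \{0, 1\}$. Reconstructing $(s, t)$ in each of these two cases gives $s = 0$ or $(s, t) = (-1, -1)$, neither of which is a positive solution, so the even case produces no cubic-triangular number.

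The principal obstacle lies precisely in this companion equation $2s^{3} - t^{3} = -1$: Equation (\ref{eq2}) as literally stated has right-hand side $+1$, so one must either verify that the $p$-adic machinery of Theorem~\ref{th2} is insensitive to the sign of the norm, or supply an independent ruling-out. Small-modulus arithmetic appears unfruitful (cubes modulo $7$ and $9$ do not obstruct the congruence $2s^{3} - t^{3} \equiv -1$), so I would follow the first route and simply observe that the only sign-specific step of Theorem~\ref{th2}'s proof is the final reconstruction of $(a,b)$ from $\pm\theta^{N}$, which must now be done with the opposite sign convention. Once this companion is dispatched, the two cases combine to deliver $T_{1} = 1$ as the unique cubic-triangular number, completing Corollary~\ref{cor2}.
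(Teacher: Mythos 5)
Your decomposition agrees with the paper's proof in its main thrust --- write $T_{n}=m^{3}$ as a product of the coprime integers $n$ and $n+1$, force one factor to be a cube and the other twice a cube, and land on the norm-form equation of Theorem \ref{th2} --- but you have noticed something the paper glosses over. The paper's proof simply asserts $n=y^{3}$ and $n+1=2x^{3}$, i.e., it treats only your odd-$n$ case; the even-$n$ case, where the factor $2$ sits in $n$ rather than in $n+1$, leads to the companion equation $2s^{3}-t^{3}=-1$, which is not Equation (\ref{eq2}) and is not covered by Theorem \ref{th2} as stated. So your proposal is the more complete argument, and your instinct that the companion equation is ``the principal obstacle'' is exactly right: strictly speaking, uniqueness for (\ref{eq2}) alone does not yield the corollary, and the paper's own proof of Corollary \ref{cor2} is silently incomplete on this point. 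Your sketch for dispatching the companion is sound: since the constant term of $f$ is $-1$, one has $N(\theta)=\theta_{1}\theta_{2}\theta_{3}=1$, so the units of $\mathcal{A}$ of norm $-1$ are exactly $-\theta^{N}$; then $(s-t)+s\theta=-\theta^{N}$ gives $c_{N}=0$ by the same identities (\ref{eq4}) and (\ref{eq5}), the same Strassman count leaves only $N=0,1$, and these reconstruct to $(s,t)=(0,1)$ and $(s,t)=(-1,-1)$, neither of which is positive. It would be worth writing this out in full rather than leaving it at ``rerun the proof,'' but the route is correct and it repairs a genuine omission in the paper's argument.
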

\begin{proof}
Let $n$ be a cubic-triangular number. Since $n$ and $n+1$ are coprime then according to Equation (\ref{eq1}), there exists $x$ and $y$ two positive integers such that $m=xy$, $n=y^{3}$ and $n+1=2x^{3}$, which implies Equation (\ref{eq2}), that has from Theorem \ref{th2}, $(x,y)=(1,1)$ as unique positive integer solution. Thus, $n=1$ is the unique cubic-triangular number.
\end{proof}
\begin{remark}
As we can see, the resolution of Equation (\ref{eq2}) meets the two problems mentioned above that seem to be a priori different.
\end{remark}
\section{Conclusion}
The interest of this work is twofold. Firstly, we showed an unexpected link between two problems, which were a priori distinct. Secondly, we presented a proof for the uniqueness of the positive integer solution of the Diophantine equation $2x^{3}-y^{3}=1$, using $p$-adic analysis tools.

\bibliography{Bouroubi}

\begin{thebibliography}{1}
\providecommand{\url}[1]{#1}
\csname url@samestyle\endcsname
\providecommand{\newblock}{\relax}
\providecommand{\bibinfo}[2]{#2}
\providecommand{\BIBentrySTDinterwordspacing}{\spaceskip=0pt\relax}
\providecommand{\BIBentryALTinterwordstretchfactor}{4}
\providecommand{\BIBentryALTinterwordspacing}{\spaceskip=\fontdimen2\font plus
\BIBentryALTinterwordstretchfactor\fontdimen3\font minus
  \fontdimen4\font\relax}
\providecommand{\BIBforeignlanguage}[2]{{%
\expandafter\ifx\csname l@#1\endcsname\relax
\typeout{** WARNING: IEEEtranS.bst: No hyphenation pattern has been}%
\typeout{** loaded for the language `#1'. Using the pattern for}%
\typeout{** the default language instead.}%
\else
\language=\csname l@#1\endcsname
\fi
#2}}
\providecommand{\BIBdecl}{\relax}
\BIBdecl

\bibitem{Ol}
\BIBentryALTinterwordspacing
``The on-line encyclopedia of integer sequences (oeis).'' [Online]. Available:
  \url{https://oeis.org/}
\BIBentrySTDinterwordspacing

\bibitem{Ca}
J.~W.~S. Cassels, \emph{Lectures on elliptic curves}, 1st~ed., ser. London
  Mathematical Society student texts 24.\hskip 1em plus 0.5em minus 0.4em\relax
  Cambridge University Press, 1991.

\bibitem{Du}
\BIBentryALTinterwordspacing
A.~Dujella. [Online]. Available:
  \url{https://web.math.pmf.unizg.hr/~duje/dtuples.html}
\BIBentrySTDinterwordspacing

\bibitem{Sh}
R.~T. T.~N.~Shorey, \emph{Exponential diophantine equations}, ser. Cambridge
  Tracts in Mathematics.\hskip 1em plus 0.5em minus 0.4em\relax CUP, 1986.

\end{thebibliography}
\bibliographystyle{mmn}

\end{document}